\newtheorem{theorem}{Theorem}
\newtheorem*{theorem*}{Theorem}
\theoremstyle{plain}
\newtheorem{claim}{Claim}
\newtheorem{conjecture}{Conjecture}
\newtheorem{corollary}{Corollary}
\newtheorem{definition}{Definition}
\newtheorem{example}{Example}
\newtheorem{lemma}{Lemma}
\newtheorem*{lemma*}{Lemma}
\newtheorem{proposition}{Proposition}
\newtheorem{remark}{Remark}
\numberwithin{equation}{section}
\newtheorem{remarks}{Remarks}
\newcommand{\R}{\mathbb{R}}
\newcommand{\Sp}{\mathbb{S}}
\newcommand{\Hy}{\mathbb{H}}
\newcommand{\Span}{\mbox{\normalfont{span}}}
\newcommand{\beeq}{\begin{equation}}
\newcommand{\eneq}{\end{equation}}
\newcommand{\beeqs}{\begin{eqnarray*}}
\newcommand{\eneqs}{\end{eqnarray*}}
\newcommand{\besp}{\begin{split}}
\newcommand{\ensp}{\end{split}}
\newcommand{\bepr}{\begin{proof}}
\newcommand{\enpr}{\end{proof}}
\newcommand{\bethr}{\begin{theorem}}
\newcommand{\enthr}{\end{theorem}}
\newcommand{\beths}{\begin{theorem*}}
\newcommand{\enths}{\end{theorem*}}
\newcommand{\becor}{\begin{corollary}}
\newcommand{\encor}{\end{corollary}}
\newcommand{\bere}{\begin{remark}}
\newcommand{\enre}{\end{remark}}
\newcommand{\bers}{\begin{remarks}}
\newcommand{\enrs}{\end{remarks}}
\newcommand{\beres}{\begin{remark*}}
\newcommand{\enres}{\end{remark*}}
\newcommand{\bele}{\begin{lemma}}
\newcommand{\enle}{\end{lemma}}
\newcommand{\beles}{\begin{lemma*}}
\newcommand{\enles}{\end{lemma*}}
\newcommand{\bepro}{\begin{proposition}}
\newcommand{\enpro}{\end{proposition}}
\newcommand{\bepros}{\begin{proposition*}}
\newcommand{\enpros}{\end{proposition*}}
\newcommand{\becl}{\begin{claim}}
\newcommand{\encl}{\end{claim}}
\newcommand{\beex}{\begin{example}}
\newcommand{\enex}{\end{example}}
\newcommand{\beexs}{\begin{example*}}
\newcommand{\enexs}{\end{example*}}
\newcommand{\beco}{\begin{conjecture}}
\newcommand{\enco}{\end{conjecture}}
\newcommand{\becos}{\begin{conjecture*}}
\newcommand{\encos}{\end{conjecture*}}
\newcommand{\bede}{\begin{definition}}
\newcommand{\bedes}{\begin{definition*}}
\newcommand{\End}{\mbox{\normalfont{End}}}
\newcommand{\lp}{\left(}
\newcommand{\rp}{\right)}
\begin{document}
\title[COMPLETE SUBMANIFOLDS WITH RELATIVE NULLITY]{COMPLETE SUBMANIFOLDS WITH RELATIVE NULLITY IN SPACE FORMS}
\author[Canevari, S.]{Samuel Canevari}
\address{Samuel Canevari -- Federal University of Sergipe, Prof. Alberto Carvalho Campus\newline%
\indent Av. Ver. Ol\'{i}mpio Grande - Porto, Itabaiana - SE, 49500-000, Brazil}%
\email{samuel@mat.ufs.br}%

\author[Freitas, G.]{Guilherme Machado de Freitas}
\address{Guilherme Machado de Freitas -- Military Engineering Institute\newline%
\indent Pra\c{c}a Gen. Tib\'{u}rcio, 80 - Urca, Rio de Janeiro - RJ, 22290-270, Brazil}%
\email{guilherme@ime.eb.br}%

\author[Guimar\~{a}es, F.]{Felippe Guimar\~{a}es}
\address{Felippe Guimar\~{a}es -- University of Bras\'{i}lia\newline%
\indent Brasilia, Federal District, 70910-900, Brazil}%
\email{felippe@impa.br}%
\thanks{The third author was supported by by the Coordena\c{c}\~ao de Aperfei\c{c}oamento de Pessoal de N\'ivel Superior - Brasil (CAPES) - Finance Code 001}

\author[Manfio, F.]{Fernando Manfio}
\address{Fernando Manfio -- University of S\~{a}o Paulo\newline%
\indent Av. Trab. S\~ao-carlense, 400, S\~{a}o Carlos - SP, 13566-590, Brazil}%
\email{manfio@icmc.usp.br}%

\author[dos Santos, J. P.]{Jo\~{a}o Paulo dos Santos}
\address{Jo\~{a}o Paulo dos Santos -- University of Bras\'{i}lia\newline%
\indent Brasilia, Federal District, 70910-900, Brazil}%
\email{joaopsantos@unb.br}%
\thanks{The fifth author was supported by FAPDF 0193.001346/2016}

\subjclass[2010]{Primary 53C40; Secondary 53C42} %
\keywords{complete submanifolds, relative nullity, splitting tensor, Milnor's conjecture}%
%\dedicatory{}

\begin{abstract}
    We use techniques based on the splitting tensor to explicitly integrate the Codazzi equation along the relative nullity distribution and express the second fundamental form in terms of the Jacobi tensor of the ambient space. This approach allows us to easily recover several important results in the literature on complete submanifolds with relative nullity of the sphere as well as derive new strong consequences in hyperbolic and Euclidean spaces. Among the consequences of our main theorem are results on submanifolds with sufficiently high index of relative nullity, submanifolds with nonpositive extrinsic curvature and submanifolds with integrable relative conullity. We show that no complete submanifold of hyperbolic space with sufficiently high index of relative nullity has extrinsic geometry bounded away from zero. As an application of these results, we derive an interesting corollary for complete submanifolds of hyperbolic space with nonpositive extrinsic curvature and discourse on their relation to Milnor's conjecture about complete surfaces with second fundamental form bounded away from zero. Finally, we also prove that every complete Euclidean submanifold with integrable relative conullity is a cylinder over the relative conullity.
\end{abstract}

\maketitle

\section{Introduction}\label{sec:intro}
The index of relative nullity introduced by Chern-Kuiper \cite{MR0050962} is a fundamental concept in the theory of isometric immersions. At a point $x\in M^n$, the relative nullity subspace $\Delta(x)$ of $f$ is the kernel of the second fundamental form of $f$ at $x$, and the index of relative nullity $\nu(x)$ is the dimension of $\Delta(x)$. It is well known that, on each open subset where the index of relative nullity is a positive constant, the submanifold is foliated by totally geodesic submanifolds of the ambient space. This fact imposes strong restrictions on complete submanifolds of space forms with relative nullity, since the leaves of the minimum relative nullity foliation of a complete submanifold are also complete.

The attempt to understand the obstructions imposed by the presence of relative nullity at any point has been of great interest in submanifold theory over the last four decades. For hypersurfaces in space forms with constant index of relative nullity, there is a useful parametrization introduced by Dajczer-Gromoll \cite{DGGaussParametrization}, the so-called Gauss parametrization, which provides a complete local description of the hypersurface in terms of its Gauss map. In higher codimension, there are several works of both local and global nature on the structure of submanifolds with positive index of relative nullity, e.g., Abe \cite{AbeRiccatiODE}, Maltz \cite{AMHartmanTheorem}, de Freitas-Guimar\~{a}es \cite{GMCylindricity}, and Dajczer, Kasioumis, Savas-Halilaj and Vlachos \cite{DKSVMinimalNullityEuclidean,DKSVMinimalNullitySphere,DKSVMinimalNullityHyperbolic}.

Given a submanifold of a space form with positive index of relative nullity, there is a natural tensor associated to its relative nullity distribution, the so-called \emph{splitting tensor} of the submanifold. The splitting tensor was first introduced by Rosenthal \cite{RDefSplittingTensor} under the name `conullity operator' and was shown to satisfy a Riccati-type ODE by Abe \cite{AbeRiccatiODE}. This equation has proved to be a powerful tool in understanding the behavior of the second fundamental form along geodesics contained in the leaves of relative nullity and has been widely used in the literature. For instance, it is a key ingredient in the proof of Sacksteder's ridigity theorem \cite{Sacksteder} and its generalizations. The solution of the aforementioned ODE allows us to express the splitting tensor of the submanifold explicitly in terms of a Jacobi-type operator $J$ of the ambient space. Moreover, the Codazzi equation can also be explicitly integrated along the relative nullity distribution and as a consequence the shape operators can be written in terms of $J$ too. We point out that, although information on the behaviour of the shape operators $A_\xi$ along the leaves of relative nullity obtained from the differential equation satisfied by them has been used in several works (see Dajczer-Tojeiro \cite{newbook} for instance), the explicit solution of $A_\xi$ in terms of $J$ had proved elusive.

The main goal of this article is to explore the explicit integrability of the Codazzi equation along the relative nullity distribution to derive strong consequences for complete submanifolds of space forms. Among the many achievements of our approach are results on sufficiently high index of relative nullity, nonpositive extrinsic curvature and integrable conullity. In particular, our main result on complete submanifolds with nonpositive extrinsic curvature in a space form yields a complete solution of the natural higher-dimensional version of Milnor's conjecture for hyperbolic ambient space (see Section \ref{sec:milnor}).

\subsection{Sufficiently high index of relative nullity}\label{sec:high}
Among the main applications of the completeness of the leaves of minimum relative nullity foliation is the rigidity of the totally geodesic inclusion of a round sphere $\Sp^n$ into $\Sp^{n+p},\,p\leq n-1$. This is the main result of Ferus \cite{FSphere}, which extends a previous result by O'Neill-Stiel \cite{OSSphere}. Notice that, by the Chern-Kuiper inequality \cite{MR0050962}, we have $\nu^\perp\leq p$ everywhere for an extrinsically flat submanifold of a space form with codimension $p$, where $\nu^\perp=n-\nu$ stands for the \emph{index of relative conullity} of the submanifold. As an immediate consequence of Lemma 32 in Florit-Guimar\~{a}es \cite{belezinha}, we actually have that any isometric immersion $f:M^n\to\Sp^{n+p}$ of a complete Riemannian manifold is totally geodesic provided that its index of minimum relative nullity $\nu_0$ is at least the Radon-Hurwitz number $\rho\lp\nu^\perp_0\rp$ at every point. The situation for complete submanifolds with relative nullity in $\mathbb{H}^{n+p}$ is far more complicated than in $\Sp^{n+p}$. Indeed, even in the simplest case of isometric immersions of $\mathbb{H}^n$ into $\mathbb{H}^{n+1}$, there exist many nontotally geodesic examples, as shown by Ferus \cite{MR0336665}, Nomizu \cite{MR0336664} and Alexander-Portnoy \cite{MR0461379}. However, none of these isometric immersions have \emph{extrinsic geometry} (i.e., second fundamental form) bounded away from zero, and this is a more general phenomenon, according to our main result on complete submanifolds with sufficiently high index of relative nullity in $\mathbb{H}^m$. 

\begin{theorem}\label{cor:hirn}
Let $f:M^n\to\mathbb{H}^m$ be an isometric immersion of a complete Riemannian manifold. If $\nu_0\geq\nu_0^\perp\left(\nu_0^\perp+1\right)/2$, then $f$ does not have extrinsic geometry bounded away from zero.
\end{theorem}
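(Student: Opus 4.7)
The plan is to argue by contradiction: suppose $\|\alpha\|\geq \epsilon>0$ everywhere on $M^n$. Then $\nu<n$ pointwise, so on the open subset $U\subset M$ where $\nu$ attains its minimum value $\nu_0$, the relative nullity distribution $\Delta$ is smooth and integrable, and its leaves are totally geodesic in both $M^n$ and $\Hy^m$. Completeness of $M$ upgrades these leaves to complete totally geodesic submanifolds of $\Hy^m$.

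I would then specialize the main theorem of the paper, which integrates the Codazzi equation explicitly along $\Delta$, to the hyperbolic ambient space. Fix $x_0\in U$, a unit vector $T\in \Delta_{x_0}$ and the complete geodesic $\gamma(t)=\exp_{x_0}(tT)$. Applied in $\Hy^m$, the integrated formula should give, for every parallel normal field $\xi$ along $\gamma$, an expression of the schematic form
\[
A_{\xi(t)}\big|_{\Delta^\perp}\;=\; P(t)^{-1}\,A_{\xi(0)}\,P(t)^{-\ast},
\]
where $P(t)=\cosh(t)\,I+\sinh(t)\,C_T|_{\Delta^\perp}$ is the Jacobi tensor of $\Hy^m$ determined by the splitting tensor $C$. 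Completeness of the leaf forces $P(t)$ to be invertible for every $t\in\R$, which constrains the real spectrum of $C_T|_{\Delta^\perp}$ to lie in $[-1,1]$.

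The core of the argument is to exploit the hypothesis $\nu_0\geq \nu_0^\perp(\nu_0^\perp+1)/2$. Since the shape operators are symmetric, the Codazzi equation in the space-form ambient space confines the linear map $T\mapsto C_T|_{\Delta^\perp}$, viewed as a map from $\Delta_{x_0}$ into $\End(\Delta^\perp_{x_0})$, to a subspace whose dimension is at most $\nu_0^\perp(\nu_0^\perp+1)/2$. A dimension count then produces a direction $T_0\in \Delta_{x_0}$ for which $C_{T_0}|_{\Delta^\perp}$ has spectrum strictly inside $(-1,1)$, so that $P_0(t)=\cosh(t)\,I+\sinh(t)\,C_{T_0}|_{\Delta^\perp}$ grows exponentially in both time directions and $P_0(t)^{-1}\to 0$ as $|t|\to\infty$.

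Substituting back into the integrated formula, $A_{\xi(t)}|_{\Delta^\perp}\to 0$ along $\gamma_0(t)=\exp_{x_0}(tT_0)$ for every parallel normal field $\xi$. Since $A_\xi$ vanishes on $\Delta$ throughout $U$, this yields $\|\alpha(\gamma_0(t))\|\to 0$, contradicting the assumed lower bound $\|\alpha\|\geq \epsilon$. The main obstacle I anticipate is the third step: pinpointing the precise subspace of $\End(\Delta^\perp_{x_0})$ containing the image of the splitting tensor, and ensuring that the dimension hypothesis really produces a direction whose splitting tensor has spectrum bounded away from $\{\pm 1\}$ rather than degenerating onto the boundary, where the Jacobi tensor fails to grow in at least one time direction.
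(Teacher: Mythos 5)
Your setup (restricting to the minimal-nullity set, using completeness of the nullity leaves, and invoking the integrated Codazzi formula so that $A_{\xi_t}$ decays whenever the relevant Jacobi tensor $J_{T_0}(t)=\cosh t\, I-\sinh t\, C_{T_0}$ blows up) is exactly the framework of the paper's Proposition, and that part is fine (modulo the cosmetic point that the correct formula is $A_{\xi_t}=\mathcal{P}_0^t\circ A_{\xi_0}J_{\gamma'(0)}(t)^{-1}\circ(\mathcal{P}_0^t)^{-1}$, not a two-sided conjugation). The genuine gap is precisely the step you flag as the ``main obstacle,'' and it does not go through as proposed. First, the Codazzi equation does not confine $T\mapsto C_T$ to a subspace of $\End(\Delta^\perp)$ of dimension at most $\nu_0^\perp(\nu_0^\perp+1)/2$: what Codazzi actually gives is only that $A_\xi C_T$ is symmetric for every normal $\xi$ (equation \eqref{eq:sode}), and bounding the dimension of $\{B:A_\xi B\in\mathrm{Sym}\ \forall\xi\}$ is a nontrivial claim you neither prove nor reduce to anything standard. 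Second, even granting such a confinement, your hypothesis is the non-strict inequality $\nu_0\geq\nu_0^\perp(\nu_0^\perp+1)/2$, so a dimension count into a space of that same dimension does not force a kernel, and no dimension count can force the spectrum of $C_{T_0}$ to lie \emph{strictly} inside $(-1,1)$: completeness only gives the closed interval $[-1,1]$ (part (ii.2) of the Proposition), and if $C_{T_0}$ is, say, symmetric with both $+1$ and $-1$ as eigenvalues, then in neither time direction does the full second fundamental form decay (indeed, by (ii.b) it can blow up on the boundary eigenspace), so the contradiction is not obtained.

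The paper sidesteps both problems with a different linear-algebra device: it considers the map $L:\Delta(x_0)\oplus\mathrm{Skew}(\Delta^\perp(x_0))\oplus\R\to\End(\Delta^\perp(x_0))$, $L(T,\mathcal{S},\lambda)=C_T+\mathcal{S}+\lambda I$. Its domain has dimension $\nu_0+\nu_0^\perp(\nu_0^\perp-1)/2+1\geq(\nu_0^\perp)^2+1$, strictly exceeding $\dim\End(\Delta^\perp(x_0))=(\nu_0^\perp)^2$, so $L$ fails to be injective even under your non-strict hypothesis, and a kernel element necessarily has $T_0\neq0$ with $C_{T_0}\in\mathrm{Skew}(\Delta^\perp(x_0))+\Span\{I\}$. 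Such an operator has at most one real eigenvalue (the scalar part), so after replacing $T_0$ by $-T_0$ if necessary, $\sqrt{-c}=1$ is not an eigenvalue of $C_{T_0}$; then $J_{T_0}(t)^{-1}\to0$ as $t\to+\infty$ and the Proposition gives $\alpha\to0$ along the forward geodesic ray alone, which suffices. Note that this uses no symmetry of $C_{T_0}$ and requires decay in only one time direction, which is exactly how the boundary-eigenvalue difficulty you would face is avoided.
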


\subsection{Submanifolds with nonpositive extrinsic curvature}
In \cite{BNonPositiveExt}, Borisenko showed that submanifolds with nonpositive extrinsic curvature in low codimension necessarily have a high index of relative nullity. This result was improved by Florit \cite{FNonpositiveExtrinsic} to the sharp inequality $\nu \geq n-2p$, which implies that any isometric immersion $f:M^n\to\Sp^{n+p}$ of a complete Riemannian manifold is totally geodesic provided that $n-\nu_n\geq 2p+1$, where $\nu_n = \max\{k: \rho(n-k) \geq k+1\}$. Using Florit's inequality, we are able to obtain novel results in a similar vein for hyperbolic ambient space.

\begin{theorem} \label{cor:nonpositive-ex}
Let $f:M^n\to\mathbb{H}^{n+p}$ be an isometric immersion of a complete Riemannian manifold with nonpositive extrinsic curvature. If $n\geq 2p^2+3p$, then $f$ does not have extrinsic geometry bounded away from zero. 
\end{theorem}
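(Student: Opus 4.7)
The plan is to reduce Theorem \ref{cor:nonpositive-ex} to Theorem \ref{cor:hirn}. The only non-trivial ingredient I will need beyond what has already been recalled in the introduction is Florit's sharp estimate from \cite{FNonpositiveExtrinsic}, namely that any isometric immersion of a Riemannian manifold into a space form with codimension $p$ and nonpositive extrinsic curvature satisfies the pointwise inequality $\nu(x)\geq n-2p$. Applied to our $f:M^n\to\mathbb{H}^{n+p}$ and taking the infimum over $M^n$, this yields the global bound $\nu_0\geq n-2p$, equivalently $\nu_0^\perp=n-\nu_0\leq 2p$.

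The next step is to verify that this bound, combined with the dimensional assumption $n\geq 2p^2+3p$, implies the hypothesis of Theorem \ref{cor:hirn}. Since $t\mapsto t(t+1)/2$ is increasing for $t\geq 0$, the estimate $\nu_0^\perp\leq 2p$ gives
\[
\frac{\nu_0^\perp(\nu_0^\perp+1)}{2}\leq\frac{2p(2p+1)}{2}=2p^2+p.
\]
On the other hand, $\nu_0\geq n-2p\geq (2p^2+3p)-2p=2p^2+p$, and therefore
\[
\nu_0\geq 2p^2+p\geq \frac{\nu_0^\perp(\nu_0^\perp+1)}{2}.
\]
Hence the hypothesis of Theorem \ref{cor:hirn} is satisfied, and one concludes directly that $f$ does not have extrinsic geometry bounded away from zero.

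The argument is essentially an elementary combination of Florit's inequality with Theorem \ref{cor:hirn}, so I do not anticipate any genuine obstacle. The only point demanding care is to ensure that Florit's inequality is being applied in hyperbolic ambient space and pointwise, so that it transfers to a bound on the \emph{minimum} index of relative nullity $\nu_0$; modulo this, the proof reduces to the numerical computation displayed above.
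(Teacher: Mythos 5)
Your proposal is correct and follows exactly the paper's own argument: apply Florit's nullity estimate $\nu\geq n-2p$ to get $\nu_0^\perp\leq 2p$ and then check numerically that $n\geq 2p^2+3p$ implies the hypothesis $\nu_0\geq\nu_0^\perp(\nu_0^\perp+1)/2$ of Theorem \ref{cor:hirn}. The only difference is that you spell out the elementary computation the paper leaves as ``straightforward to check.''
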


An interesting application of the above result is the minimality of complete submanifolds of hyperbolic space with nonpositive extrinsic curvature and parallel mean curvature in low codimension. Indeed, we have a slightly more general result.

\begin{corollary}\label{cor:mean-curvature-vector}
Let $f:M^n\to\mathbb{H}^{n+p}$ be an isometric immersion of a complete Riemannian manifold with nonpositive extrinsic curvature and mean curvature vector field with constant length. If $n\geq 2p^2+3p$, then $f$ is minimal. In particular, a complete CMC hypersurface of $\mathbb{H}^{n+1}$ with nonpositive extrinsic curvature is necessarily minimal if $n\geq5$.
\end{corollary}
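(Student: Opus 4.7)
The plan is to reduce this corollary to Theorem \ref{cor:nonpositive-ex} via a pointwise bound on the norm of the mean curvature vector. Since the hypotheses include $n \geq 2p^2 + 3p$ and nonpositive extrinsic curvature, Theorem \ref{cor:nonpositive-ex} applies and produces a sequence $\{x_k\} \subset M$ along which the Hilbert-Schmidt norm $\|\alpha\|$ of the second fundamental form tends to zero; this is precisely the negation of the statement ``$f$ has extrinsic geometry bounded away from zero.''

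Next, I would combine this with the elementary Cauchy-Schwarz bound $\|H\|^2 \leq \frac{1}{n}\|\alpha\|^2$, obtained by writing $nH = \sum_{i=1}^n \alpha(e_i,e_i)$ in a local orthonormal frame of $TM$ and estimating the right-hand side in the normal bundle. This forces $\|H(x_k)\| \to 0$, and since $\|H\|$ is assumed constant on $M$, we conclude $\|H\| \equiv 0$; in other words, $f$ is minimal. The final assertion about CMC hypersurfaces in $\mathbb{H}^{n+1}$ is the specialization $p=1$: the bound $n \geq 2p^2 + 3p$ becomes $n \geq 5$, and constant mean curvature on a connected hypersurface is equivalent to $\|H\|$ being constant, so the general statement applies verbatim.

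There is no substantial obstacle in this argument, since all of the deep geometric content has already been extracted in Theorem \ref{cor:nonpositive-ex}. The only point that requires care is the precise meaning of ``extrinsic geometry bounded away from zero'': once it is unpacked as $\inf_{x\in M}\|\alpha(x)\| = 0$, the Cauchy-Schwarz bound and the constancy of $\|H\|$ immediately close the argument.
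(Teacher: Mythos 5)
Your proof is correct, and its overall strategy coincides with the paper's: invoke Theorem \ref{cor:nonpositive-ex} to get $\inf_M\|\alpha\|=0$, bound $\|\mathcal{H}\|$ by $\|\alpha\|$ pointwise, and use the constancy of $\|\mathcal{H}\|$ to conclude minimality; the case $p=1$ is then the same specialization. The one genuine difference is how the pointwise inequality is obtained. The paper derives it from the Gauss equation: writing the scalar curvature as $s=-1+\frac{n}{n-1}\|\mathcal{H}\|^2-\frac{1}{n(n-1)}\|\alpha\|^2$ and using nonpositive extrinsic curvature to get $s\leq-1$, which yields the sharper estimate $\|\alpha\|^2\geq n^2\|\mathcal{H}\|^2$. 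You instead use the purely algebraic Cauchy--Schwarz bound $\|\alpha\|^2\geq n\|\mathcal{H}\|^2$, valid for any isometric immersion with no curvature hypothesis at all. Your constant is weaker, but either inequality suffices since all that is needed is $\|\mathcal{H}(x_k)\|\to0$ along a sequence where $\|\alpha(x_k)\|\to0$; a small bonus of your route is that it makes transparent that the nonpositive extrinsic curvature assumption enters only through Theorem \ref{cor:nonpositive-ex}, whereas the paper's derivation uses it a second time (harmlessly) to sharpen the constant.
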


\subsection{Submanifolds with integrable relative conullity}
Every submanifold with integrable relative conullity $\Delta^\perp$ is locally (globally if it is simply connected and the leaves of $\Delta$ are complete) a generalized cylinder (cf. Tojeiro \cite{MR3439132} and Dajczer-Tojeiro \cite{newbook}). The integrability of $\Delta^\perp$ is equivalent to the fact that the splitting tensor $C_T$ of $\Delta$ is selfadjoint for all $T\in\Gamma(\Delta)$. It turns out that this characterization still holds for any smooth totally geodesic distribution $D\subset\Delta$. In this work, we  take advantage from the latter fact to obtain a complete characterization of cylinders in Euclidean space as well as a universal bound for the extrinsic geometry of the relative conullity leaves in hyperbolic space. We also explicitly state a characterization of totally geodesic submanifolds of the sphere in the same vein which, although following immediately from Lemma 32 in Florit-Guimar\~{a}es \cite{belezinha}, had never been stated before in the literature, as far as we are aware.

\begin{theorem}\label{theo:intcon}
Let $f:M^n\to\mathbb{Q}_c^m$ be an isometric immersion of a complete Riemannian manifold and let $D$ be a smooth totally geodesic distribution of rank $0<k\leq n$ such that $D(x)\subset\Delta(x)$ for all $x\in M^n$. Suppose that the orthogonal complement $D^\perp$ is integrable. Then:
\begin{itemize}
    \item[\emph{(i)}] If $c>0$, we have $k=n$, i.e., $f$ is totally geodesic.
    \item[\emph{(ii)}] If $c=0,\ f$ is a $k$-cylinder.
    \item[\emph{(iii)}] If $c<0$, the inclusion $i:L^{n-k}\to M^n$ of any leaf $L^{n-k}$ of $D^\perp$ has extrinsic geometry bounded by $\sqrt{-c}$.
\end{itemize}
\end{theorem}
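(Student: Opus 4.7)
The plan is to integrate the Jacobi equation along $D$-geodesics via the paper's main theorem, use the selfadjointness of $C_T$ for $T\in\Gamma(D)$ granted by the integrability of $D^\perp$, and perform a Riccati eigenvalue analysis in each curvature regime.

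Fix $x_0\in M$ and a unit $T_0\in D(x_0)$. Since $D$ is totally geodesic in $M$ and $D\subset\Delta$, the geodesic $\gamma(t)=\exp_{x_0}^M(tT_0)$ is defined on all of $\R$ by completeness, lies in a leaf of $D$, and is also a geodesic of $\Q_c^m$ (as $\Delta$-leaves are totally geodesic in the ambient). Along $\gamma$, the main theorem furnishes the Jacobi-type formula $S(t):=C_{\gamma'(t)}=-\Phi'(t)\Phi(t)^{-1}$, where the operator-valued $\Phi$ solves $\Phi''+c\Phi=0$ with $\Phi(0)=I$ and $\Phi'(0)=-S(0)$. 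Diagonalizing the self-adjoint $S(0)$ reduces the analysis to the scalar Jacobi equation $\phi''+c\phi=0$, $\phi(0)=1$, $\phi'(0)=-\lambda$, for each eigenvalue $\lambda\in\R$ of $S(0)$, with $S(t)$ blowing up precisely at the real zeros of $\phi$.

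For (i), $c>0$: the scalar solution $\phi(t)=\cos(\sqrt{c}\,t)-(\lambda/\sqrt{c})\sin(\sqrt{c}\,t)$ has a real zero for every $\lambda\in\R$, forcing a finite-time blow-up of $S(t)$; this contradicts the smoothness of $C$ along the complete geodesic $\gamma$ unless $\Delta^\perp(x_0)=\{0\}$. Varying $x_0$ and $T_0$ shows $f$ is totally geodesic, and an analogous intrinsic Riccati analysis applied to $D\subset TM$ on the space form $M$ of positive curvature $c$ then forces $D^\perp=\{0\}$, i.e., $k=n$. For (iii), $c<0$: with $\omega=\sqrt{-c}$, the scalar solution $\phi(t)=\cosh(\omega t)-(\lambda/\omega)\sinh(\omega t)$ has no real zero precisely when $|\lambda|\leq\omega$, so completeness of $\gamma$ yields $\|C_T\|\leq\sqrt{-c}\,\|T\|$ for every $T\in D$. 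Via the standard identification of the splitting tensor with the extrinsic geometry of the leaves of $D^\perp$ (together with the analogous bound on the splitting of $D$ inside the $\Delta$-leaves), one obtains $\|\alpha^L\|\leq\sqrt{-c}$ on any such leaf $L$.

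For (ii), $c=0$: the linear solution $\phi(t)=1-t\lambda$ forces every eigenvalue of $S(0)$ to vanish, so $C_T\equiv 0$ on $\Delta^\perp$ for $T\in\Gamma(D)$. Together with the total geodesicity of $D$ and the analogous vanishing inside the $\Delta$-leaves, this makes both $D$ and $D^\perp$ parallel in $M$; the de Rham decomposition theorem and completeness then produce a global Riemannian product $M\cong\R^k\times L^{n-k}$ compatible with $f$, and since $D\subset\Delta$ is mapped by $f$ onto parallel totally geodesic affine $k$-planes of $\R^m$, $f$ is a $k$-cylinder over $L^{n-k}$. The chief obstacle is ensuring that every ingredient --- the Jacobi formula for $S(t)$, the identification between the splitting tensor and the extrinsic geometry of $D^\perp$-leaves, and the auxiliary Riccati analysis inside the $\Delta$-leaves --- extends cleanly from the standard case $D=\Delta$ to a strict subdistribution $D\subsetneq\Delta$, and, in case (ii), to promote the infinitesimal parallelism to a global cylindrical decomposition via de Rham.
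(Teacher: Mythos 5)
Your parts (i) and (iii) follow essentially the paper's own route: integrability of $D^\perp$ makes $C_T$ self-adjoint, the invertibility of the Jacobi-type solution along complete $D$-geodesics rules out real eigenvalues when $c>0$ and confines them to $\left[-\sqrt{-c},\sqrt{-c}\right]$ when $c<0$, and the identification of the shape operators of a leaf of $D^\perp$ in $M^n$ with the $C_T$'s gives (iii). Two remarks: in (i) your escape clause is misplaced --- the self-adjoint operator $C_{T_0}$ acts on $D^\perp(x_0)$, so the blow-up contradiction occurs unless $D^\perp(x_0)=\{0\}$, which forces $k=n$ directly; your two-step detour (first $\Delta^\perp=0$, then an ``intrinsic'' Riccati argument) is logically valid but unnecessary, since Proposition \ref{fakemainlemma} applies verbatim to any $D\subset\Delta$ with $k<n$. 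Similarly, the parentheticals about ``the splitting of $D$ inside the $\Delta$-leaves'' play no role in (iii): the normal space of a leaf of $D^\perp$ in $M^n$ is exactly $D$, so $A^i_T=C_T$ already carries the full extrinsic geometry of the leaf.

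The genuine gap is in part (ii). From $C\equiv0$ you correctly conclude that $D$ and $D^\perp$ are both parallel, but the de Rham decomposition theorem produces a \emph{global} Riemannian product only when $M^n$ is simply connected, which is not assumed in the statement (and should not be: a cylinder over a closed curve already has nontrivial fundamental group). As written, your argument splits only the universal cover, and you would still need to descend that product structure to $M^n$; moreover, a product decomposition of $M^n$ alone does not yet split the immersion --- you must show that $V=f_*D$ is one fixed $k$-dimensional subspace of $\R^m$ and that $f$ factors as $g\times I$ through the decomposition, which you assert (``compatible with $f$'') rather than prove. The paper's Lemma \ref{lem:cyl} closes both points without any simple connectedness hypothesis: parallelism of $D$ together with $D\subset\Delta$ makes $f_*D$ constant equal to $V$, the complete leaves of $D$ are carried onto affine $k$-planes parallel to $V$, and the explicit map $\varphi=\left(\pi,f(\cdot)_V\right)$, $\pi:M^n\to L^{n-k}=M/D$, is a global trivialization realizing $f=\left(g\times I\right)\circ\varphi$. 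To repair your version you would need either this direct construction or a covering/descent argument (for instance, using that each leaf of $D$ is isometric to $\R^k$ because it covers an affine $k$-plane) in place of the bare appeal to de Rham.
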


\section{Proofs}\label{sec:proof}
The key point to prove all of our main results stated in the introduction is the fact that the Codazzi equation can be explicitly integrated along the relative nullity distribution, which enables us to have a deep understanding on the behavior of the second fundamental form along the leaves of relative nullity. We will start by introducing some definitions and notations that will be used throughout this section.

Let $f:M^n\to\tilde{M}^{n+p}$ be an isometric immersion. We denote by $\alpha$ and $A_\xi$ the second fundamental form and the shape operator of $f$ with respect to a normal vector $\xi$, respectively. The \emph{relative nullity subspace} $\Delta\left(x\right)$ of $f$ at $x$ is the kernel of its second fundamental form $\alpha$ at $x$, that is,
\begin{equation*}
    \Delta \left(x\right)=\left\{X\in T_xM:\alpha\left(X,Y\right)=0\textup{ for all }Y\in T_xM\right\}.
\end{equation*}
The orthogonal complement $\Delta^\perp\lp x\rp$ of $\Delta\lp x\rp$ in $T_xM$ is called the \emph{relative conullity subspace} of $f$ at $x$. The dimensions $\nu\left(x\right)$ and $\nu^\perp\left(x\right)$ of $\Delta\left(x\right)$ and $\Delta^\perp\left(x\right)$ are the \emph{index of relative nullity} and the \emph{index of relative conullity} of $f$ at $x$, respectively. The isometric immersion $f$ is said to be \emph{totally geodesic} at $x\in M^n$ if $\nu\left(x\right)=n$, or equivalently, $\nu^\perp(x)=0$. If $\nu\equiv n$ then $f$ is called a \emph{totally geodesic} isometric immersion.

Let $D$ be a smooth distribution on $M^n$, the \emph{splitting tensor} $C$ of $D$ is the map $C: \Gamma(D)\times\Gamma(D^\perp) \to \Gamma(D^\perp)$ defined by 
\begin{equation*}
    C(T,X) = C_T X = - \left(\nabla_X T\right)_{D^\perp},
\end{equation*}
where $(\ )_{D^\perp}$ stands for the orthogonal projection onto $D^\perp$. It is easily checked that $C$ is tensorial with respect to both variables and hence gives rise to an endomorphism $C_T:D^\perp(x)\to D^\perp(x)$ for any $x\in M^n$ and $T\in D(x)$, which we call the \emph{splitting tensor} of $D$ at $x$ with respect to $T$. 

We are particularly interested in the case in which $D$ is totally geodesic and $D(x)\subset\Delta(x)$ for all $x\in M^n$. In order to integrate the Codazzi equation along $D$, we need the lemma below, which provides useful information on the behavior of $C_T$ and $A_\xi$ along $\Delta$ (a proof can be found in Dajczer-Tojeiro \cite{newbook}).

\begin{lemma}
Let $f:M^n\to\mathbb{Q}_c^m$ be an isometric immersion of a Riemannian manifold and let $D$ be a smooth totally geodesic distribution such that $D(x)\subset\Delta(x)$ for all $x\in M^n$. The differential equation
\begin{equation}\label{eq:spttns}
    \nabla_TC_S=C_SC_T+C_{\nabla_TS}+c\left\langle T,S\right\rangle I
\end{equation}
holds for all $S,T\in\Gamma(D)$. In particular, the operator $C_{\gamma'}$ along a unit speed geodesic $\gamma$ contained in a leaf of $D$ satisfies the differential equation
\begin{equation}\label{eq:stde}
    \frac{D}{dt}C_{\gamma'}=C_{\gamma'}^2+cI.
\end{equation}
Moreover, the differential equation
\begin{equation}\label{eq:sode}
    \frac{D}{dt}A_\xi=A_\xi C_{\gamma'}
\end{equation}
holds for every parallel normal vector field $\xi$ along $\gamma$.
\end{lemma}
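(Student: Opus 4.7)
The plan is to derive all three formulas from the Gauss and Codazzi equations of the ambient space form, leveraging the two structural hypotheses: $D\subset\Delta$ (which makes $\alpha$, and hence every shape operator $A_\xi$, vanish on $D$-vectors) and the totally geodesic condition on $D$ (which makes $(\nabla_T S)_{D^\perp}=0$ for all $S,T\in\Gamma(D)$). Equation \eqref{eq:stde} will then come out as a direct specialization of \eqref{eq:spttns}, while \eqref{eq:sode} will follow from a separate application of the Codazzi identity for $A_\xi$ along the geodesic.

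To prove \eqref{eq:spttns}, I would fix $T,S\in\Gamma(D)$ and $X\in\Gamma(D^\perp)$ and compute the $D^\perp$-component of $R(T,X)S$ in two ways. On the geometric side, the Gauss equation together with $\alpha(T,\cdot)=0=\alpha(S,\cdot)$ reduces $R(T,X)S$ to the purely ambient term $c\left(\langle X,S\rangle T-\langle T,S\rangle X\right)$, which collapses to $-c\langle T,S\rangle X$ since $X\perp S$. On the algebraic side, I would expand $R(T,X)S=\nabla_T\nabla_X S-\nabla_X\nabla_T S-\nabla_{[T,X]}S$ and systematically substitute $(\nabla_X S)_{D^\perp}=-C_S X$, $(\nabla_X T)_{D^\perp}=-C_T X$, and $(\nabla_T S)_{D^\perp}=0$; after projecting onto $D^\perp$, the cross terms involving $(\nabla_T X)_{D^\perp}$ cancel and one is left with $-(\nabla_T C_S)X+C_S C_T X+C_{\nabla_T S}X$, so that equating the two expressions yields \eqref{eq:spttns}. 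Specialization to $T=S=\gamma'$ along a unit-speed geodesic of a leaf of $D$ then kills the $C_{\nabla_T S}$ term (as $\nabla_{\gamma'}\gamma'=0$) and uses $\langle\gamma',\gamma'\rangle=1$, delivering the Riccati ODE \eqref{eq:stde}.

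For \eqref{eq:sode}, I would start from the ambient Codazzi identity---that $(X,Y)\mapsto\nabla_X(A_\xi Y)-A_{\nabla^\perp_X\xi}Y-A_\xi\nabla_X Y$ is symmetric---and apply it with $X=\gamma'$, $Y=Z$ a vector field in $D^\perp$ parallel along $\gamma$, and $\xi$ a parallel normal field along $\gamma$. Since $\gamma'\in\Delta$ annihilates every shape operator, the ``$Y$-side'' of the Codazzi identity collapses to $-A_\xi\nabla_Z\gamma'=A_\xi C_{\gamma'}Z$ (the $D$-part of $\nabla_Z\gamma'$ being annihilated by $A_\xi$), while the ``$X$-side'' collapses to $\nabla_{\gamma'}(A_\xi Z)=((D/dt)A_\xi)Z$; equating these two sides yields \eqref{eq:sode}.

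The main technical difficulty is the careful projection bookkeeping in \eqref{eq:spttns}: one has to split each of $\nabla_T X$, $\nabla_X S$, $\nabla_X T$ and $[T,X]$ into $D$ and $D^\perp$ components, and recognise at each step when $D\subset\Delta$ eliminates terms via Gauss versus when the totally geodesic condition eliminates $D^\perp$-components of $D$-covariant derivatives. Once that is in hand, \eqref{eq:stde} and \eqref{eq:sode} are routine.
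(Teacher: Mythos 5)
Your argument is correct and is essentially the same approach as the source: the paper does not prove this lemma itself but cites Dajczer--Tojeiro, and the proof there is exactly your route---projecting the Gauss equation for $R(T,X)S$ onto $D^\perp$ and using that $D$ is totally geodesic and $D\subset\Delta$ to obtain \eqref{eq:spttns} (hence \eqref{eq:stde} by setting $T=S=\gamma'$), and applying the Codazzi equation with one argument tangent to $D$ to obtain \eqref{eq:sode}. The only points worth making explicit in a full write-up are that $\nabla_T C_S$ is taken with respect to the connection induced on $D^\perp$ (so the $(\nabla_T X)_{D^\perp}$ cross terms are absorbed exactly as you indicate) and that parallel transport along $\gamma$ preserves $D$ and $D^\perp$ (immediate from $D$ being totally geodesic), which justifies choosing the test field $Z$ parallel and in $D^\perp$.
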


Let $\gamma: [0,b) \to M$ be a geodesic such that $\gamma'(0) \in D(\gamma(0))$ contained in a leaf of $D$, the solution of the Jacobi equation $J_{\gamma'(0)}''+cI=0$ along $\gamma$ with initial values $J_{\gamma'(0)}(0)=I,\,J'_{\gamma'(0)}(0)=-C_{\gamma'(0)}$ is given by

\begin{equation}\label{eq:defJacobi}
\begin{aligned}
	J_{\gamma'(0)}(t) &=	\begin{cases}
		\cos{\sqrt{c}t} I - \left(\sin{\sqrt{c}t}/\sqrt{c}\right)C_{\gamma'(0)}, & \text{if } c=1 \\
		\cosh{\sqrt{-c}t} I - \left(\sinh{\sqrt{-c}t}/\sqrt{-c}\right)C_{\gamma'(0)}, & \text{if } c=-1 \\
		I - tC_{\gamma'(0)}, & \text{if } c=0.
		\end{cases}
\end{aligned}
\end{equation} Finally, we denote by $E^{C_{\gamma'(0)}}_{\lambda}$ the eigenspace of $C_{\gamma'(0)}$ associated to the eigenvalue $\lambda \in \R$ and by $\mathcal{P}_0^t$ the parallel transport from $\gamma(0)$ to $\gamma(t)$ along $\gamma$, respectively. We are now in a position to state our main proposition.
    
\begin{proposition}\label{fakemainlemma} 
Let $f:M^n\to\mathbb{Q}_c^m$ be an isometric immersion, and let $D$ be a smooth totally geodesic distribution of rank $0<k<n$ on an open subset $U\subset M^n$ such that $D(x)\subset\Delta(x)$ for all $x\in U$. If $\gamma:\left[0,b\right)\to U$ is a unit speed geodesic such that $\gamma'(0)\in D\left(\gamma(0)\right)$, then $J_{\gamma'(0)}$ is invertible in $\left[0,b\right)$, and the splitting tensor $C_{\gamma'(t)}$ of $D$ along $\gamma$ is given by
\begin{equation}\label{eq:splitting}
    C_{\gamma'(t)}=\mathcal{P}_0^t\circ\left(-J_{\gamma'(0)}'(t)J_{\gamma'(0)}(t)^{-1}\right)\circ\left(\mathcal{P}_0^t\right)^{-1}.
\end{equation}
Moreover, the second fundamental form $\alpha$ of $f$ along $\gamma$ is given by
\begin{equation}\label{eq:secondfundamentalform}
    \alpha_{\gamma(t)}=\mathcal{P}_0^t\circ\alpha_{\gamma(0)}\left(J_{\gamma'(0)}(t)^{-1}\circ\left(\mathcal{P}_0^t\right)^{-1},\left(\mathcal{P}_0^t\right)^{-1}\right).
\end{equation}
Equivalently, the shape operator $A_{\xi_t}$ of $f$ with respect to a parallel normal vector field $\xi_t$ along $\gamma$ is given by
\begin{equation}\label{eq:shapeoperator}
    A_{\xi_t}=\mathcal{P}_0^t\circ A_{\xi_0}J_{\gamma'(0)}(t)^{-1}\circ\left(\mathcal{P}_0^t\right)^{-1}.
\end{equation}
In particular, $\textup{Im}\,A_{\xi_t}$ and $\ker A_{\xi_t}$ are parallel with respect to $\nabla$ along $\gamma$. Therefore, $A_{\xi_t}$ has constant rank along $\gamma$, and hence the numbers of positive and negative eigenvalues remain constant along $\gamma$. Furthermore, $C_{\gamma'(t)}$ is symmetric with respect to $\alpha_{\gamma(t)}$, or equivalently, the endomorphism $A_{\xi_t}C_{\gamma'(t)}$ of $D^\perp\left(\gamma(t)\right)$ is symmetric, and thus $\ker A_{\xi_t}$ is invariant by $C_{\gamma'(t)}$. In addition:
\begin{itemize}
    \item[\emph{(i)}] If $c>0$ and $b\geq\pi/\sqrt{c}$, then $C_{\gamma'(0)}$ can have no real eigenvalues. In particular, $C_{\gamma'(0)}$ cannot be symmetric. Also, for any normal direction at $\gamma(0)$ the numbers of positive and negative principal curvatures are equal.
    \item[\emph{(ii)}] If $c\leq0$ and $\gamma$ extends to a geodesic ray $\tilde{\gamma}:\left[0,\infty\right)\to U$, then all the real eigenvalues of $C_{\gamma'(0)}$ belong to the interval $\left(-\infty,\sqrt{-c}\right]$. Also, for any pair of parallel tangent vector fields $X_t$ and $Y_t$ along $\gamma$ such that $X_0\notin E^{C_{\gamma'(0)}}_{\sqrt{-c}}$ we have that $\alpha_{\gamma(t)}\left(X_t,Y_t\right)\to0$, or equivalently, $A_{\xi_t}X_t\to0$ as $t\to\infty$. In particular, if $\sqrt{-c}$ is not an eigenvalue of $C_{\gamma'(0)}$, then $\alpha_{\gamma(t)}\to0$, or equivalently, $A_{\xi_t}\to0$ as $t\to\infty$. In the case $X_0\in E^{C_{\gamma'(0)}}_{\sqrt{-c}}$ we have that:
    \begin{itemize}
        \item[\emph{(ii.a)}] If $c=0$, then $\alpha_{\gamma(t)}\left(X_t,Y_t\right)$ and $A_{\xi_t}X_t$ are parallel along $\gamma$ with respect to $\nabla^\perp$ and $\nabla$, respectively.
        \item[\emph{(ii.b)}] If $c<0$, and $\alpha_{\gamma(0)}\left(X_0,Y_0\right)\neq0$ or $A_{\xi_0}X_0\neq0$, then $\alpha_{\gamma(t)}\left(X_t,Y_t\right)\to\infty$ and $A_{\xi_t}X_t\to\infty$ as $t\to\infty$, respectively. If $\alpha_{\gamma(0)}\left(X_0,Y_0\right)=0$ or $A_{\xi_0}X_0=0$, then $\alpha_{\gamma(t)}\left(X_t,Y_t\right)\equiv0$ and $A_{\xi_t}X_t\equiv0$, respectively.
    \end{itemize}
Furthermore, if $\gamma$ extends to a geodesic line $\tilde{\gamma}:\left(-\infty,\infty\right)\to U$, we have that:
    \begin{itemize}
        \item[\emph{(ii.1)}] If $c=0$, then the only possible real eigenvalue of $C_{\gamma'(0)}$ is 0. In particular, if $C_{\gamma'(0)}$ is symmetric, then $C_{\gamma'(0)}=0$.
        \item[\emph{(ii.2)}]  If $c<0$, then all the real eigenvalues of $C_{\gamma'(0)}$ belong to the interval $\left[-\sqrt{-c},\sqrt{-c}\right]$. Also, if either $\sqrt{-c}$ or $-\sqrt{-c}$ is not an eigenvalue of $C_{\gamma'(0)}$, then $\alpha_{\gamma(t)}\to0$, or equivalently, $A_{\xi_t}\to0$ as either $t\to\infty$ or $t\to-\infty$, respectively.
    \end{itemize}
\end{itemize}
\end{proposition}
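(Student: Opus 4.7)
The plan is to reduce the proposition to a linear-ODE analysis in a parallel frame along $\gamma$. First, I would fix a parallel orthonormal frame along $\gamma$; conjugation by the parallel transport $\mathcal{P}_0^t$ turns $C_{\gamma'(t)}$ and $A_{\xi_t}$ into matrix-valued functions $\bar{C}(t)$ and $\bar{A}(t)$ satisfying the ODEs $\bar{C}'=\bar{C}^{2}+cI$ and $\bar{A}'=\bar{A}\bar{C}$ (the parallel-frame versions of \eqref{eq:stde} and \eqref{eq:sode}). Let $J$ be the matrix solution of the linear Jacobi equation $J''+cJ=0$ with $J(0)=I$ and $J'(0)=-\bar{C}(0)$; the explicit expressions in \eqref{eq:defJacobi} confirm this description. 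To establish invertibility of $J$ on $[0,b)$, I would \emph{linearize} the Riccati equation as follows: let $\tilde{J}$ solve the linear ODE $\tilde{J}'=-\bar{C}\tilde{J}$ with $\tilde{J}(0)=I$. Linearity forces $\tilde{J}$ to be smooth on all of $[0,b)$, and Liouville's formula $(\det\tilde{J})'=-(\trac\bar{C})\det\tilde{J}$ shows it is invertible there. Differentiating and using $\bar{C}'=\bar{C}^{2}+cI$ gives $\tilde{J}''+c\tilde{J}=0$ with the same initial data as $J$, so $\tilde{J}=J$ by uniqueness. Reading off $\bar{C}=-J'J^{-1}$ yields \eqref{eq:splitting}, and an analogous computation $(\bar{A}(0)J^{-1})'=(\bar{A}(0)J^{-1})\bar{C}$ with initial value $\bar{A}(0)$ produces $\bar{A}(t)=\bar{A}(0)J(t)^{-1}$, which is \eqref{eq:shapeoperator}. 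Formula \eqref{eq:secondfundamentalform} is its restatement through $\langle A_{\xi}X,Y\rangle=\langle\alpha(X,Y),\xi\rangle$.

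The parallelism, rank, signature, and symmetry assertions then follow cleanly from \eqref{eq:shapeoperator}. Since $J(t)^{-1}$ is an isomorphism, $\textup{Im}\,A_{\xi_{t}}=\mathcal{P}_0^{t}(\textup{Im}\,A_{\xi_{0}})$ is manifestly parallel. For $\ker A_{\xi_{t}}$, I would first derive the symmetry of $A_{\xi}C_{T}$ (equivalently, the symmetry of $C_{T}$ with respect to $\alpha$) for every $T\in\Delta$ by the standard pointwise Codazzi argument: the identity $(\nabla_{X}\alpha)(T,Z)=(\nabla_{T}\alpha)(X,Z)$ reduces, using $\alpha(T,\cdot)=0$, to $\alpha(C_{T}X,Z)=-(\nabla_{T}\alpha)(X,Z)$, and exchanging $X\leftrightarrow Z$ yields $\alpha(C_{T}X,Z)=\alpha(X,C_{T}Z)$. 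Combined with symmetry of $A_{\xi_{0}}$ this forces $\ker A_{\xi_{0}}$ to be $C_{\gamma'(0)}$-invariant; since $J(t)$ is a polynomial in $C_{\gamma'(0)}$, it preserves $\ker A_{\xi_{0}}$, so $\ker A_{\xi_{t}}=\mathcal{P}_0^{t}(\ker A_{\xi_{0}})$ is parallel. Constant rank is immediate, and because each $A_{\xi_{t}}$ is symmetric with constant rank and varies smoothly, its eigenvalues cannot cross zero, which yields constancy of the signature.

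For item (i), substituting \eqref{eq:defJacobi} into the invertibility conclusion shows that any real eigenvalue $\lambda$ of $C_{\gamma'(0)}$ with eigenvector $v$ produces $J(t)v=\bigl(\cos(\sqrt{c}\,t)-(\lambda/\sqrt{c})\sin(\sqrt{c}\,t)\bigr)v$, which vanishes at some $t\in(0,\pi/\sqrt{c})$, contradicting invertibility of $J$ on $[0,b)\supseteq[0,\pi/\sqrt{c})$. Hence $C_{\gamma'(0)}$ has no real eigenvalue and so cannot be symmetric. At $t=\pi/\sqrt{c}$ one has $J=-I$, whence \eqref{eq:shapeoperator} gives $A_{\xi_{\pi/\sqrt{c}}}=-\mathcal{P}_0^{\pi/\sqrt{c}}A_{\xi_{0}}(\mathcal{P}_0^{\pi/\sqrt{c}})^{-1}$; its principal curvatures are the negatives of those at $t=0$, and combined with the constancy of the signature (taking limits when $b=\pi/\sqrt{c}$) this forces equal numbers of positive and negative principal curvatures. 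For item (ii), the same reading of \eqref{eq:defJacobi} shows that, for $c\leq 0$, invertibility of $J$ on $[0,\infty)$ is equivalent to the absence of real eigenvalues of $C_{\gamma'(0)}$ greater than $\sqrt{-c}$ (with the convention $\sqrt{-c}=0$ when $c=0$). For the asymptotics of $A_{\xi_{t}}X_{t}=\mathcal{P}_0^{t}\bar{A}(0)J(t)^{-1}X_{0}$ I would decompose $X_{0}$ along the generalized eigenspaces of $C_{\gamma'(0)}$: on each generalized eigenspace with eigenvalue $\lambda<\sqrt{-c}$ (and on every pair of complex-conjugate eigenspaces), $J(t)^{-1}$ decays like $e^{-\sqrt{-c}\,t}$ up to a polynomial factor in $t$; on $E_{\sqrt{-c}}^{C_{\gamma'(0)}}$ one has $J(t)v=e^{-\sqrt{-c}\,t}v$ (or $v$ when $c=0$), yielding (ii.a) for $c=0$ and (ii.b) for $c<0$. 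Parts (ii.1) and (ii.2) follow by applying the same analysis to the reversed geodesic $t\mapsto\tilde{\gamma}(-t)$, whose splitting tensor at $0$ equals $-C_{\gamma'(0)}$.

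The main technical obstacle is controlling the asymptotics in item (ii) when $C_{\gamma'(0)}$ is not diagonalizable: one must handle genuine Jordan blocks and complex-conjugate eigenvalue pairs. What makes this tractable is the observation that $J(t)$ is a matrix polynomial in $C_{\gamma'(0)}$, so on any generalized eigenspace it is block-triangular with the same leading scalar behavior as in the diagonalizable case, up to polynomial-in-$t$ corrections that do not alter the exponential decay or growth governing the statements.
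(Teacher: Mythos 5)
Your proposal is correct and is essentially the paper's own argument: the paper likewise linearizes the Riccati equation through the first-order system \eqref{eq:jtfode}, identifies its solution with the Jacobi operator via uniqueness for the second-order equation \eqref{eq:jtsode}, obtains \eqref{eq:splitting} and then \eqref{eq:shapeoperator} by checking that the parallel transport of $A_{\xi_0}J_{\gamma'(0)}(t)^{-1}$ solves \eqref{eq:sode}, and finally runs the same $\cot$/$\coth$ range analysis, the limit $J_{\gamma'(0)}(t)\to -I$ as $t\to\pi/\sqrt{c}$ for (i), and the reversed geodesic for (ii.1)--(ii.2). Your micro-variations (Liouville's formula instead of ODE uniqueness for invertibility, the pointwise Codazzi identity rather than \eqref{eq:sode} for the symmetry of $A_{\xi}C_{\gamma'}$, and the explicit generalized-eigenspace bookkeeping for a nondiagonalizable $C_{\gamma'(0)}$ -- which, note, still leaves untreated generalized eigenvectors attached to the eigenvalue $\sqrt{-c}$ itself, precisely the delicate case the paper's proof also passes over) do not change the route.
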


\begin{proof}
Let $T\in\Gamma\left(\gamma^*\left(\End\left(\Delta^\perp\right)\right)\right)$ be the unique solution of the differential equation
\begin{equation}\label{eq:jtfode}
   \frac{D}{dt}T+C_{\gamma'}T=0
\end{equation}
with initial condition $T(0)=I$. Using \eqref{eq:stde} and \eqref{eq:jtfode} we have that $T$ is also a solution on $\left[0,b\right)$ of the second order differential equation with a constant coefficient
\begin{equation}\label{eq:jtsode}
    \frac{D^2}{dt^2}T+cT=0.
\end{equation}
On the other hand, it is straightforward to check that $J(t)=\mathcal{P}_0^t\circ J_{\gamma'(0)}(t)\circ{\left(\mathcal{P}_0^t\right)}^{-1}$ also satisfies \eqref{eq:jtsode}. Since $J(0)=I=T(0)$ and $DJ/dt(0)=-C_{\gamma'(0)}=DT/dt(0)$, we conclude that $T=J$ on $\left[0,b\right)$. Thus, by the uniqueness theorem for solutions of equation \eqref{eq:jtfode}, for each $t\in\left[0,b\right)$ the endomorphism $J(t)$ of $D^\perp\left(\gamma(t)\right)$ is invertible, and \eqref{eq:splitting} holds. Now, let $A(t)=\mathcal{P}_0^t\circ A_{\xi\left(0\right)}J_{\gamma'(0)}(t)^{-1}\circ{\left(\mathcal{P}_0^t\right)}^{-1},\ t\in\left[0,b\right)$. Using \eqref{eq:splitting} we have that $A$ also satisfies \eqref{eq:sode}. Since $A(0)=A_{\xi(0)}$, we conclude that \eqref{eq:shapeoperator}, and equivalently \eqref{eq:secondfundamentalform}, holds. That $A_\xi C_{\gamma'}$ is symmetric follows immediately from equation \eqref{eq:sode}, and the remaining assertions above part (\emph{i}) are direct consequences of equation \eqref{eq:shapeoperator}.

If $c>0$, then the function $\sqrt{c}\cot{\sqrt{c}t}$ takes every real value for $t\in\left[0,\pi/\sqrt{c}\right)$, and since $J_{\gamma'(0)}(t)$ is invertible, this implies that $C_{\gamma'(0)}$ can have no real eigenvalues when $b\geq\pi/\sqrt{c}$. Also, $J_{\gamma'(0)}(t)\to-I$ as $t\to\pi/\sqrt{c}$, so $\left(\mathcal{P}_0^t\right)^{-1}\circ A_{\xi_t}\circ\mathcal{P}_0^t\to-A_{\xi_0}$ as $t\to\pi/\sqrt{c}$, and since the numbers of positive and negative eigenvalues of $A_{\xi_t}$ remain constant along $\gamma$, we conclude that they must be equal.

Similarly, if $c<0$, then the function $\sqrt{-c}\coth{\sqrt{-c}t}$ takes every value in the interval $\left(\sqrt{-c},\infty\right)$ for $t\in\left[0,\infty\right)$, hence all the real eigenvalues of $C_{\gamma'(0)}$ must belong to the interval $\left(-\infty,\sqrt{-c}\right]$ when $\gamma$ extends to a geodesic ray $\tilde{\gamma}:\left[0,\infty\right)\to U$. Also, for every complex number $\lambda\neq\sqrt{-c}$, we have that the function $\cosh{\sqrt{-c}t}-\left(\sinh{\sqrt{-c}t}/\sqrt{-c}\right)\lambda\to\infty$ as $t\to\infty$, and then $J_{\gamma'(0)}(t)X_0\to\infty$ as $t\to\infty$ for any parallel vector field $X_t$ along $\gamma$ as long as $X_0\notin E_{\sqrt{-c}}^{C_{\gamma'(0)}}$. By equation \eqref{eq:shapeoperator}, we conclude that $A_{\xi_t}X_t\to0$ as $t\to\infty$. On the other hand, when $\lambda=\sqrt{-c}$, then $\cosh{\sqrt{-c}t}-\left(\sinh{\sqrt{-c}t}/\sqrt{-c}\right)\lambda=\cosh{\sqrt{-c}t}-\sinh{\sqrt{-c}t}\to0$ as $t\to\infty$. Hence, if $X_0\in E_{\sqrt{-c}}^{C_{\gamma'(0)}}$, we conclude from \eqref{eq:secondfundamentalform} and \eqref{eq:shapeoperator} that $\alpha_{\gamma(t)}\left(X_t,Y_t\right)\to\infty$ and $A_{\xi_t}X_t\to\infty$ as $t\to\infty$ when $\alpha_{\gamma(0)}\left(X_0,Y_0\right)\neq0$ and $A_{\xi_0}X_0\neq0$, respectively. The last assertion in part (\emph{b}) follows from the parallelism of $\ker A_{\xi_t}$ with respect to $\Delta$ along $\gamma$ and from
\begin{equation*}
\begin{aligned}
    \alpha_{\gamma(t)}\left(X_t,Y_t\right) &=\mathcal{P}_0^t\circ\alpha_{\gamma(0)}\left(J_{\gamma'(0)}(t)^{-1}X_0,Y_0\right)\\
    &=\mathcal{P}_0^t\circ\alpha_{\gamma(0)}\left(\frac{1}{\cosh\sqrt{-c}t-\sinh\sqrt{-c}t}X_0,Y_0\right).
\end{aligned}
\end{equation*}

If $\gamma$ extends to a geodesic line $\tilde{\gamma}:\left(-\infty,\infty\right)\to U$, then the real eigenvalues of $C_{\gamma'(0)}$ belong to the interval $\left(-\infty,\sqrt{-c}\right]\cap\left[-\sqrt{-c},\infty\right)=\left[-\sqrt{-c},\sqrt{-c}\right]$, and if either $\sqrt{-c}$ or $-\sqrt{-c}$ is not an eigenvalue of $C_{\gamma'(0)}$, then $\sqrt{-c}$ is not an eigenvalue of either $C_{\gamma'(0)}$ or $C_{-\gamma'(0)}$, hence $\alpha_{\gamma(t)}\to0$, or equivalently, $A_{\xi_t}\to0$ as either $t\to\infty$ or $t\to-\infty$, respectively. Case $c=0$ is analogous.
\end{proof}

Proposition \ref{fakemainlemma} will allow us to derive all the results in this work with little effort. Besides that, for complete submanifolds with relative nullity in the sphere, we have that $J_{\gamma'(0)}(0) = -J_{\gamma'(0)}(\pi)$ for any geodesic $\gamma$ as in Proposition \ref{fakemainlemma}. From this fact, we easily recover an important result due to Dajczer-Gromoll \cite{DGSphereNull}, which states that at any point where the index of relative nullity is minimal and for any normal direction at that point the numbers of positive and negative principal curvatures are equal. Notice also that Lemma 32 in Florit-Guimar\~{a}es \cite{belezinha} is embedded in part (\emph{i}) of Proposition \ref{fakemainlemma}.

The idea of the proof of Theorem \ref{cor:hirn} now comes down to finding a direction in which the splitting tensor is well behaved.

\begin{proof}[Proof of Theorem \ref{cor:hirn}]
%Suppose $c>0$ and $f$ nontotally geodesic. Let $x_0\in M^n$ such that $\nu\left(x_0\right)=\nu_0$. Consider the linear map $L:\Delta\left(x_0\right)\oplus\R\to\textup{End}\left(\Delta^\perp\left(x_0\right)\right)$ given by
%\begin{equation*}
%    L\left(T,\lambda\right)=C_T+\lambda I.
%\end{equation*}
%Since $\nu_0\geq\rho\left(\nu_0^\perp\right)$, we conclude that there exists a unit vector $T_0\in\Delta\left(x_0\right)$ and $\lambda_0\in\R$ such that $L\left(T_0,\lambda_0\right)$ is singular. Hence, $J_{T_0}(t)$ is singular for some $t\in\R$, which is a contradiction with Proposition \ref{fakemainlemma}.

%Now, assume $c<0$ and take again
Let $x_0\in M^n$ such that $\nu\left(x_0\right)=\nu_0$. Consider the linear map $L:\Delta\left(x_0\right)\oplus\textup{Skew}\left(\Delta^\perp\left(x_0\right)\right)\oplus\R\to\textup{End}\left(\Delta^\perp\left(x_0\right)\right)$ given by
\begin{equation*}
    L\left(T,\mathcal{S},\lambda\right)=C_T+\mathcal{S}+\lambda I.
\end{equation*}
Since $\nu_0\geq\nu_0^\perp\left(\nu_0^\perp+1\right)/2$, we conclude that $L$ cannot be injective. Hence, there exists a unit vector $T_0\in\Delta\left(x_0\right)$ such that $C_{T_0}\in\textup{Skew}\left(\Delta^\perp\left(x_0\right)\right)+\Span\{I\}$. Replacing $T_0$ with $-T_0$ if necessary, we can assume that $\sqrt{-c}$ is not an eigenvalue of $C_{T_0}$. Then $J_{T_0}(t)^{-1}\to0$ as $t\to+\infty$. By Proposition \ref{fakemainlemma}, we conclude that $\alpha\to0$ along the geodesic starting from $x_0$ in the direction $T_0$.
\end{proof}

Using the results in Florit \cite{FNonpositiveExtrinsic} we obtain Theorem \ref{cor:nonpositive-ex} almost as a corollary of Theorem \ref{cor:hirn}.

\begin{proof}[Proof of Theorem \ref{cor:nonpositive-ex}]
It follows from Florit's nullity estimate \cite{FNonpositiveExtrinsic} that $\nu_0\geq n-2p$. Thus $\nu_0^\perp\leq 2p$ and it is straightforward to check that the estimate $n\geq 2p^2+3p$ implies the one in Theorem \ref{cor:hirn}.
\end{proof}

The proof of Corollary \ref{cor:mean-curvature-vector} follows the same ideas as in the previous theorem.

\begin{proof}[Proof of Corollary \ref{cor:mean-curvature-vector}]
It follows from the Gauss equation that the scalar curvature $s$ of $M^n$ is given by
\begin{equation}
 s = -1 + \dfrac{n}{n-1} \left\|\mathcal{H}\right\|^2 - \dfrac{1}{n(n-1)} \left\| \alpha \right\|^2,  \label{scalar}
\end{equation}
where $\mathcal{H}$ is the mean curvature vector of $f$ and $|| \alpha ||$ denotes the norm of the second fundamental form, given by 
$$|| \alpha ||^2 = \displaystyle \sum_{i,j=1}^n || \alpha(e_i, e_j) ||^2,$$ for an orthonormal tangent basis $e_1, \ldots, e_n$. If the extrinsic sectional curvature is nonpositive, then $s \leq -1$ and consequently, by Equation \eqref{scalar}, we have
$$
\left\| \alpha \right\|^2 \geq n^2 \left\| \mathcal{H} \right\|^2.
$$
By Theorem \ref{cor:nonpositive-ex}, $f$ does not have extrinsic geometry bounded away from zero. Since $\left\| \mathcal{H} \right\|$ is constant, we must have $\left\| \mathcal{H} \right\|=0$.
\end{proof}

To prove Theorem \ref{theo:intcon} we will need the lemma below, which deals with the case where the conullity is totally geodesic and whose local version is well known (cf. Dajczer-Tojeiro \cite{newbook}).

\begin{lemma}\label{lem:cyl}
Let $f:M^n\to\mathbb{Q}_c^m$ be an isometric immersion of a complete Riemannian manifold and let $D$ be a smooth totally geodesic distribution of rank $0<k\leq n$ such that $D(x)\subset\Delta(x)$ for all $x\in M^n$. If the conullity distribution $D^\perp$ is totally geodesic, then $c=0$ and $f$ is a $k$-cylinder.
\end{lemma}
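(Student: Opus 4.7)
The plan is to argue in three steps. First, I would show that $C_T\equiv 0$ for all $T\in\Gamma(D)$: since $D^\perp$ is totally geodesic, $\nabla_X Y\in\Gamma(D^\perp)$ for any $X,Y\in\Gamma(D^\perp)$, so $\langle\nabla_X T,Y\rangle=-\langle T,\nabla_X Y\rangle=0$, giving $C_T X=-(\nabla_X T)_{D^\perp}=0$.

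Next, I would rule out $c\neq 0$ via the Gauss equation. Both $D$ and $D^\perp$ being totally geodesic forces $M$ to locally split as a Riemannian product, so the mixed intrinsic curvature vanishes: $R(T,X)Y=0$ for $T\in\Gamma(D)$ and $X,Y\in\Gamma(D^\perp)$. On the other hand, the Gauss equation combined with $\alpha(T,\cdot)=\alpha(W,\cdot)=0$ (since $D\subset\Delta$) yields
\[
\langle R(T,X)Y,W\rangle \;=\; c\,\langle X,Y\rangle\,\langle T,W\rangle
\]
for $T,W\in\Gamma(D)$ and $X,Y\in\Gamma(D^\perp)$. Choosing $T=W\neq 0$ in $D$ and $X=Y\neq 0$ in $D^\perp$---available precisely when $0<k<n$, which is the nontrivial case---forces $c=0$. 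Alternatively, the case $c>0$ can be dispatched directly from Proposition \ref{fakemainlemma}(\emph{i}): completeness of $M$ provides geodesic rays of arbitrary length inside leaves of $D$, yet $C_{\gamma'(0)}=0$ has the real eigenvalue $0$, contradicting (\emph{i}).

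Finally, with $c=0$ and $C_T\equiv 0$, I would show that $df(D(x))\equiv V\subset\R^m$ is a fixed $k$-plane. For any curve $\sigma$ in $M$ and any $Z\in\Gamma(D)$ that is $\nabla$-parallel along $\sigma$, the ambient Euclidean derivative $\tilde\nabla_{\sigma'}Z = \nabla_{\sigma'}Z+\alpha(\sigma',Z)$ vanishes: the first term is zero by choice, and the second because $Z\in\Delta$. Hence $df(D)$ is parallel in the trivial bundle $M\times\R^m$, so $df(D(x))\equiv V$, and each complete leaf of $D$ is mapped onto an affine $k$-plane parallel to $V$. Combined with the well-known local cylinder theorem (Dajczer--Tojeiro \cite{newbook}) and the completeness of $M$ together with that of the totally geodesic leaves of $D$ and $D^\perp$, the local cylinder decompositions patch into the required global $k$-cylinder structure. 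The main obstacle I anticipate is precisely this globalization: turning local cylinder charts into a global product $M\cong (M/D)\times V$ is essentially a de Rham-type splitting argument and requires a careful use of completeness to verify that the leaf space $M/D$ is Hausdorff and that the $V$-translates of a single cross-section (a leaf of $D^\perp$) exhaust $f(M)$ without collapsing.
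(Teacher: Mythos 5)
Your first three steps are sound. The vanishing of the splitting tensor is exactly the paper's starting point, and your derivation of $c=0$ is a genuinely different (and valid) route: the paper simply plugs $C\equiv0$ into the differential equation \eqref{eq:spttns} and reads off $c\left\langle T,S\right\rangle I=0$, whereas you observe that both distributions being totally geodesic makes $D$ parallel, kill the mixed curvature, and compare with the Gauss equation; both arguments implicitly require $k<n$ so that $D^\perp\neq0$ (for $k=n$ the conclusion $c=0$ is not really in play anyway), so this is not a defect relative to the paper. Your alternative for $c>0$ via Proposition \ref{fakemainlemma}(\emph{i}) also works, though it is not needed. The constancy of $V=f_*D$ and the fact that the complete leaves of $D$ are carried onto affine $k$-planes parallel to $V$ likewise match the paper.

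The gap is the final step, which is precisely the content of the lemma: as the paper itself notes, the \emph{local} cylinder statement is well known, so the point to be proved is the global product structure, and you leave it as ``the local cylinder decompositions patch'' via a de Rham-type argument, explicitly flagging the Hausdorffness of $M/D$ and the exhaustion by $V$-translates as the main unresolved obstacle. As written this asserts the conclusion rather than proving it. Note that no patching is needed once $V$ is a fixed subspace of $\R^m$: following the paper, each leaf of $D$, being complete, flat and mapped by a local isometry onto an affine plane parallel to $V$, is carried diffeomorphically (indeed isometrically) onto that plane, so $L^{n-k}=M/D$ is a manifold and $\pi:M^n\to L^{n-k}$ a vector bundle; the map $\varphi(x)=\left(\pi(x),f(x)_V\right)$ is then a \emph{global} trivialization, the map $\tilde{g}=f_{V^\perp}$ is constant along leaves (leaves move only in the $V$-directions) and so descends to an immersion $g:L^{n-k}\to V^\perp$ whose induced metric makes $\varphi$ an isometry, and one checks directly that $f=\left(g\times I\right)\circ\varphi$, i.e., $f$ is a $k$-cylinder. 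You should either carry out this explicit construction or actually supply the splitting argument you allude to; anticipating the obstacle does not discharge it.
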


\begin{proof}
Since the splitting tensor $C$ of $D$ vanishes identically, it follows from \eqref{eq:spttns} that $c=0$. Because $D$ is totally geodesic, $D\subset\Delta$ and $M^n$ is complete, so are the leaves of $D$ and they are mapped by $f$ onto $k$-dimensional affine subspaces of $\R^m$, thus $L^{n-k}=M/D$ is a smooth manifold of dimension $n-k$ and the projection $\pi:M^n\to L^{n-k}$ is a vector bundle. The fact that the distribution $D^\perp$ is totally geodesic yields
\begin{equation}
    \tilde{\nabla}_Xf_*T=f_*\nabla_XT\in\Gamma(D)
\end{equation}
for all $X\in\Gamma\left(D^\perp\right)$ and $T\in\Gamma(D)$. Thus $V^k=f_*D$ is constant in $\R^m$, and the map $\varphi:M^n\to L^{n-k}\times V^k$ given by
\begin{equation*}
    \varphi(x)=\left(\pi(x),f(x)_V\right)
\end{equation*}
is a global trivialization of $\pi$, where $(\,)_V$ stands for the orthogonal projection onto $V$. Moreover, the map $\tilde{g}:M^n\to V^\perp$ given by $\tilde{g}(x)=f(x)_{V^\perp}$ projects to an immersion $g:L^{n-k}\to V^\perp$ whose induced metric on $L^{n-k}$ turns $\varphi$ into an isometry. Finally, it is straightforward to check that
\begin{equation*}
    f=\left(g\times I\right)\circ\varphi,
\end{equation*}
where $I:V\to V$ denotes the identity map.

\end{proof}

We are now able to understand the behavior of complete submanifolds with integrable conullity in space forms.

\begin{proof}[Proof of Theorem \ref{theo:intcon}]
Since $M^n$ is complete and $D$ is totally geodesic, the leaves of $D$ are also complete. Since $D^\perp$ is integrable, the splitting tensor $C_T$ is symmetric for all $T\in D$, thus part (\emph{i}) follows from part (\emph{i}) of Proposition \ref{fakemainlemma}. If $c=0$, part (\emph{ii.1}) of Proposition \ref{fakemainlemma} implies that $C=0$. Thus, $D^\perp$ is totally geodesic, and part (\emph{ii}) follows from Lemma \ref{lem:cyl}. Part (\emph{iii}) is immediate from part (\emph{ii.2}) of Proposition \ref{fakemainlemma}, since $A^i_T=C_T$ for every $T\in N_iL(x)=\Delta(x)$ and every $x\in L^{n-k}$.
\end{proof}

\section{Milnor's conjecture}\label{sec:milnor} 
In the 1960s, motivated by the then-recent result of Efimov \cite{MR0167938} on the nonexistence of complete Euclidean surfaces with negative Gaussian curvature $K$ bounded away from zero, J. Milnor suggested a conjecture concerning complete Euclidean surfaces whose $K$ does not change sign. It states that such a surface must have an umbilic, or else have points at which both principal curvatures simultaneously assume values arbitrarily close to zero.
\beco[Milnor \cite{MR0211332}]
Let $f:M^2\to\R^3$ be a complete umbilic-free surface with extrinsic geometry bounded away from zero. Then either the Gauss curvature $K$ of $f$ changes sign or else $K\equiv0$.
\enco
A small step toward the solution of the above conjecture was obtained by Smyth-Xavier \cite{MR914845}, where a consequence of their principal curvature theorem is that any complete surface of nonpositive Gauss curvature in $\R^3$ with one of its principal curvatures bounded away from zero must be a cylinder. Milnor-type results were also obtained for surfaces of $\mathbb{H}^3$ and $\Sp^3$ by G\'{a}lvez-Mart\'{i}nez-Teruel \cite{MR3351996}, where the authors use an abstract result about Codazzi pairs to prove that no complete surface can be immersed into $\mathbb{H}^3$ or $\Sp^3$ with nonpositive extrinsic or intrinsic sectional curvature, respectively, and one of its principal curvatures bounded away from zero.

It is interesting to investigate a similar question for hypersurfaces of space forms. Note that the same principal curvature theorem of Smyth-Xavier \cite{MR914845} coupled with Hartman-Niremberg theorem \cite{MR0126812} actually implies that every complete hypersurface in $\R^{n+1}$ with nonpositive sectional curvature and one of its principal curvatures bounded away from zero is necessarily a cylinder over a plane curve. In $\Sp^{n+1}$, Lemma 32 of Florit-Guimar\~{a}es \cite{belezinha} yields that any complete hypersurface with nonpositive extrinsic sectional curvature must be totally geodesic for $n\geq4$. On the other hand, if $n=3$ the polar map $\psi:N^1_f\Sp^2_{1/3}\to\Sp^4$ of the Veronese surface $f:\Sp^2_{1/3}\to\Sp^4$, given by
\begin{equation*}
    \psi\lp y,w\rp=w,
\end{equation*}
is a minimal isoparametric hypersurface with three distinct principal curvatures and index of relative nullity $\nu\equiv1$. Thus, $\psi$ gives a nontotally geodesic counterexample of a complete umbilic-free hypersurface in $\Sp^4$ with nonpositive extrinsic sectional curvature and shape operator bounded away from zero.

Finally, concerning hypersurfaces in $\mathbb{H}^{n+1}$, Theorem \ref{cor:nonpositive-ex} in the case $p=1$ can be restated as follows.

\begin{corollary}\label{hhmcs}
Let $f:M^n\to\mathbb{H}^{n+1}$ be a complete hypersurface with nonpositive extrinsic sectional curvature, $n\geq5$. Then $f$ does not have extrinsic geometry bounded away from zero.
\end{corollary}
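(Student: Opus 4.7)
The plan is to recognize that Corollary \ref{hhmcs} is precisely the hypersurface specialization of Theorem \ref{cor:nonpositive-ex}, obtained by setting $p=1$, in which case the general dimensional threshold $n \geq 2p^2 + 3p$ collapses to $n \geq 5$. No new ideas are required beyond what has already been established in the previous section; the content of the corollary is a matter of bookkeeping.

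Concretely, I would carry out the specialization in three short steps. First, apply Florit's nullity estimate \cite{FNonpositiveExtrinsic} to the complete hypersurface $f$ with nonpositive extrinsic sectional curvature and codimension $p=1$: this yields $\nu_0 \geq n-2$ at a point of minimum relative nullity, hence $\nu_0^\perp \leq 2$. Second, verify the arithmetic inequality $\nu_0^\perp(\nu_0^\perp+1)/2 \leq 3 \leq n-2 \leq \nu_0$, which holds for every $n \geq 5$. Third, invoke Theorem \ref{cor:hirn} to conclude that $f$ cannot have extrinsic geometry bounded away from zero.

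The only genuine difficulty is entirely hidden inside Theorem \ref{cor:hirn}, whose proof rests on Proposition \ref{fakemainlemma}: one finds, by a dimension count on the linear map $L(T,\mathcal{S},\lambda) = C_T + \mathcal{S} + \lambda I$, a unit vector $T_0 \in \Delta(x_0)$ such that $C_{T_0}$ lies in $\textup{Skew}(\Delta^\perp(x_0)) + \Span\{I\}$, and then, after possibly reversing $T_0$, exploits the fact that $\sqrt{-c}=1$ is not an eigenvalue of $C_{T_0}$ to make $J_{T_0}(t)^{-1} \to 0$ as $t \to +\infty$ along the leaf geodesic, forcing $\alpha \to 0$. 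For the present corollary, however, there is no additional obstacle to overcome beyond checking that $2p^2 + 3p = 5$ when $p=1$ and that Florit's bound produces the required inequality $\nu_0 \geq \nu_0^\perp(\nu_0^\perp+1)/2$.
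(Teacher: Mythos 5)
Your proposal is correct and follows exactly the paper's route: the paper obtains Corollary \ref{hhmcs} simply by restating Theorem \ref{cor:nonpositive-ex} with $p=1$ (where $2p^2+3p=5$), and Theorem \ref{cor:nonpositive-ex} itself is proved, as you describe, by combining Florit's nullity estimate $\nu_0\geq n-2p$ with Theorem \ref{cor:hirn}. Your explicit check that $\nu_0^\perp\leq 2$ gives $\nu_0^\perp(\nu_0^\perp+1)/2\leq 3\leq n-2\leq\nu_0$ for $n\geq 5$ is precisely the bookkeeping the paper leaves implicit.
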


For $n=3,4$, it follows from part (\emph{ii}) of Proposition \ref{fakemainlemma} that no principal curvature can be bounded away from zero. In the case $n=4$, we can actually implement a workaround to show that Corollary \ref{hhmcs} is still true, although the proof uses distinct techniques involving the Gauss parametrization and will not be presented in this article.

If we assume instead that $f$ has nonnegative extrinsic sectional curvature in the above statement, then plenty of counterexamples arise from a hyperbolic cylinder $f:\Sp^k\left(\rho\right)\times\Hy^{n-k}\left(\sqrt{1+\rho^2}\right)\hookrightarrow\Hy^{n+1}$, which is an isoparametric hypersurface of $\Hy^{n+1}$ with two distinct principal curvatures. We have that $f$ is clearly a complete umbilic-free hypersurface with shape operator bounded away from zero and strictly positive extrinsic sectional curvature. Since all these properties are stable under small perturbations, we can construct many more generic counterexamples from $f$.

\bibliographystyle{acm}
\bibliography{bib-completesubmanifolds}
\end{document}